\newtheorem{claim}{Claim}[section]
\title{Basic Analysis of Bin-Packing Heuristics}
\author{Bastian Rieck}
\urladdr{\url{https://bastian.rieck.me}}
\email{\href{mailto:bastian@rieck.me}{bastian@rieck.me}}
\begin{document}

\begin{abstract}
  The bin-packing problem continues to remain relevant in numerous
  application areas. This technical report discusses the empirical
  performance of different bin-packing heuristics for certain test 
  problems.
\end{abstract}

\maketitle

\section{Introduction} %

We assume that we are given a finite set of items
$I := \{i_1, i_2, \dots\}$ of different sizes $s(i_j) \in \mathbb{N}_{>
0}$, as well as a set of $K \in \mathbb{N}_{> 0}$ many bins of size $B$.
The bin-packing problem now refers to the question of whether there is
a partition of~$I$ into~$K$ disjoint sets such that the sum of all item
sizes in each of the sets is less than or equal to~$B$. Of particular
interest are solutions in which $K$ is being \emph{minimised}.
The bin-packing problem is highly relevant in numerous application
domains and is often studied with additional constraints~\cite{Balogh08, Balogh14}

As the bin-packing problem is known to be NP-hard~\cite{Garey90},
the development and subsequent analysis of appropriate heuristics are of
the utmost importance. This note provides an elementary analysis of
several common heuristics, while discussing their worst-case running time and
outlining techniques for improving their implementation.

\section{Caveat lector} %

This note is an extended version of a note the author wrote during his
undergraduate studies.\footnote{
  The note will remain available under \url{https://bastian.rieck.me/research/Note_BP.pdf}.
}
While the author is \emph{not} an expert in bin-packing algorithms or
heuristics, he sincerely hopes that this note and its accompanying
code\footnote{
  The code is available under \url{https://github.com/Pseudomanifold/bin-packing}.
} will prove at least as useful as the preceding note, which appears to
be cited whenever empirical results for bin-packing heuristics are
required. The reader should be aware that the heuristics chosen in this
note reflect the author's knowledge of the field more than 10 years ago.
This list is therefore not to be considered an authoritative source of
the field at present.

\section{Pseudocode \& analysis of worst-case running time} %

\subsection{First-Fit}

In pseudo-code, we have the following algorithm:

\begin{algorithm}[H]
\caption{First-Fit}
	\begin{algorithmic}[1]
		\FOR{All objects $i = 1,2,\dots,n$}
			\FOR{All bins $j = 1,2,\dots$}
				\IF{Object $i$ fits in bin $j$}
					\STATE Pack object $i$ in bin
					$j$.
					\STATE Break the loop and pack
					the next object.
				\ENDIF	
			\ENDFOR	
			\IF{Object $i$ did not fit in any available bin}
				\STATE Create new bin and pack object
				$i$.
			\ENDIF
		\ENDFOR
	\end{algorithmic}
\end{algorithm}

In the worst-case, a new bin has to be opened each time a new object is
inserted. Thus, there are $1,2,3,\dots,n-1$ executions of the inner
loop, which yields an asympotical factor of $\mathcal{O}(n^2)$.

\subsection{First-Fit-Decreasing} %

Since all weights of the objects are known \emph{prior} to running any
algorithm, \texttt{Counting Sort} is the best choice for sorting the
objects. 

\begin{algorithm}[H]
\caption{First-Fit-Decreasing}
	\begin{algorithmic}[1]
		\STATE Sort objects in decreasing order using \texttt{Counting Sort}.
		\STATE Apply \texttt{First-Fit} to the sorted list of
		objects.
	\end{algorithmic}
\end{algorithm}

Since \texttt{Counting Sort} has a complexity of $\mathcal{O}(n+k)$,
where k is the largest weight, the algorithm is obviously dominated by
the running time of \texttt{First-Fit}, which yields a factor of
$\mathcal{O}(n^2)$.

\subsection{Max-Rest} %

In pseudo-code, the following algorithm is used:

\begin{algorithm}[H]
\caption{Max-Rest}
	\begin{algorithmic}[1]
		\FOR{All objects $i = 1,2,\dots,n$}
			\STATE Determine $k = min\{i \mid c_i =
			\min_{j=1}^{j=m}c_j\}$, the index of the bin
			with maximum remaining capacity.
			\IF{Object $i$ fits in bin $k$}
				\STATE Pack object $i$ in bin $k$. 
			\ELSE
				\STATE Create new bin and pack object $i$.
			\ENDIF
		\ENDFOR
	\end{algorithmic}
\end{algorithm}

If a naive algorithm is used, determining the bin with maximum remaining
capacity yields a factor of $\mathcal{O}(n)$. Thus, the worst-case
running time of the algorithm is $\mathcal{O}(n^2)$. 

A more detailed analysis shows that the bin can be determined by using a
\emph{priority queue} (i.e.\ a heap). In this case, the bin can be
determined in constant time. Packing the object (either in a new bin or
in an existing one) then requires adding or updating an element of the
heap, which can be done in $\mathcal{O}(\log{}n)$. Hence, the improved
version of the algorithm has a worst-case running time of
$\mathcal{O}(n\log{}n)$:

\begin{algorithm}[H]
\caption{Max-Rest-Priority-Queue}
	\begin{algorithmic}[1]
		\FOR{All objects $i = 1,2,\dots,n$}
			\IF{Object $i$ fits in top-most bin of the
			priority queue}
				\STATE Remove top-most bin from queue.
				\STATE Add object $i$ to this bin.
				\STATE Push updated bin to queue.
			\ELSE
				\STATE Create new bin and pack object $i$.
			\ENDIF
		\ENDFOR
	\end{algorithmic}
\end{algorithm}

\subsection{Next-Fit} %

In pseudo-code, my implementation proceeds as follows:

\begin{algorithm}[H]
\caption{Next-Fit}
	\begin{algorithmic}[1]
		\FOR{All objects $i = 1,2,\dots,n$}
			\IF{Object $i$ fits in current bin}
				\STATE Pack object $i$ in current bin. 
			\ELSE
				\STATE Create new bin, make it the
				current bin, and pack object $i$.
			\ENDIF
		\ENDFOR
	\end{algorithmic}
\end{algorithm}

Since packing an object can be done in constant time, the algorithm is
dominated by the loop, which has a running time of $\Theta(n)$.

\subsection{Next-Fit-Decreasing} %

The algorithm is straightforward:

\begin{algorithm}[H]
\caption{Next-Fit-Decreasing}
	\begin{algorithmic}[1]
		\STATE Sort objects in decreasing order using
		\texttt{Counting Sort}.
		\STATE Apply \texttt{Next-Fit} to the sorted list of
		objects.
	\end{algorithmic}
\end{algorithm}

Since \texttt{Next-Fit} has a running time of $\Theta(n)$, the
dominating factor is the \texttt{Counting Sort} algorithm, which has a
running time of $\mathcal{O}(n+k)$, where $k$ is the maximum weight of
the problem.

\subsection{Best-Fit} %

The algorithm works like this:

\begin{algorithm}[H]
\caption{Best-Fit}
	\begin{algorithmic}[1]
		\FOR{All objects $i = 1,2,\dots,n$}
			\FOR{All bins $j = 1,2,\dots$}
				\IF{Object $i$ fits in bin $j$}
					\STATE Calulate remaining
					capacity after the object has
					been added.
				\ENDIF
			\ENDFOR	

			\STATE Pack object $i$ in bin $j$, where $j$ is
			the bin with minimum remaining capacity after
			adding the object (i.e. the object ``fits
			best'').
			\STATE If no such bin exists, open a new one and
			add the object.
		\ENDFOR
	\end{algorithmic}
\end{algorithm}

Since \emph{all bins} are examined in each step, the algorithm has an
obvious running time of $\mathcal{O}(n^2)$. The running time can be
decreased by using a heap in order to determine the best bin:

\begin{algorithm}[H]
\caption{Best-Fit-Heap}
	\begin{algorithmic}[1]
		\FOR{All objects $i = 1,2,\dots,n$}
			\STATE Perform a Breadth-First-Search in the
			heap to determine the best bin.
			\IF{Best bin has been found}
				\STATE Pack object $i$ in this bin.
				\STATE Restore the heap property.
			\ELSE
				\STATE Open a new bin and add the
				object.
			\ENDIF
		\ENDFOR
	\end{algorithmic}
\end{algorithm}

Using a heap decreases the running time slightly. An even more rapid
implementation of this algorithm will be outlined later on.

\section{Results for the test problems} %

The benchmarks have been performed using an Intel Celeron M $1.5$ GHz.
The results are not too surprising: Obviously, the \texttt{Next-Fit}
heuristic is fastest because only 1 bin has to be managed. However, due
to the efficient data structure (a priority queue) that has been used
for the \texttt{Max-Rest} heuristic, this heuristic will generally be
almost as fast as \texttt{Next-Fit}. Furthermore, the
implementation of the \texttt{Best-Fit} heuristic has a
worst-case running time of $\mathcal{O}(Kn)$, where $K$ is the
maximum weight. Thus, the slowest algorithms are \texttt{First-Fit} and
\text{First-Fit-Decreasing}.

Detailed results can be studied in the table below. In each set, the
best solutions are marked using the dagger symbol ``$\dagger$''. The
timing is not accurate for the small running times. This is due to the
\texttt{CLOCKS\_PER\_SEC} macro that has been used for the benchmarks.
Hence, in some cases, the same running times will appear. This means
that the running times differ by a very small amount (measured in
``raw'' CPU cycles). A value of $0$ signifies that the measurement is
outside the notable range.

The algorithms have been compiled with the \texttt{-O3} optimizations of
the \texttt{g++} compiler. All heuristics have been abbreviated to fit
in the table. Thus, \texttt{MR} is \texttt{Max-Rest}, for example. The
$+$-signs after the algorithm names specify whether an optimized version
of the algorithm has been used. For implementation details, refer to
Table~\ref{tab:Naming_Scheme} on p.~\pageref{tab:Naming_Scheme}.

\begin{center}
	\begin{longtable}{
				p{0.25\textwidth}
				p{0.25\textwidth}
				p{0.25\textwidth}
				p{0.25\textwidth}
			 }
		\caption{Results for the test problems}\\
		\toprule	 
		Problem set & Algorithm & Bins & Time in s\\
		\midrule
		\endhead
		\multirow{6}{*}{\tt bp1.dat}	& \tt MR+ 	& $628$ 		& $0$\\
						& \tt FF++	& $564$ 		    & $0$\\
						& \tt FFD++	& $545$ 		    & $0$\\
						& \tt NF	  & $711$ 		    & $0$\\
						& \tt NFD+	& $686$ 		    & $0$\\
						& \tt BF++	& $553^\dagger$	& $0$\\
		\midrule
		\multirow{6}{*}{\tt bp2.dat}	& \tt MR+ 	& $6131$ 			& $0^\dagger$\\
						& \tt FF++	& $5420$ 			& $0^\dagger$\\
						& \tt FFD++	& $5321^\dagger$ 	& $0^\dagger$\\
						& \tt NF	  & $6986$ 			& $0^\dagger$\\
						& \tt NFD+	& $6719$ 			& $0^\dagger$\\
						& \tt BF++	& $5377$			& $0^\dagger$\\
		\midrule
		\multirow{6}{*}{\tt bp3.dat}	& \tt MR+ 	& $16637$ 		& $0^\dagger$\\
						& \tt FF++	& $16637$ 		& $0.0071825$\\
						& \tt FFD++	& $10000^\dagger$	& $0^\dagger$\\
						& \tt NF	  & $16637$ 		& $0^\dagger$\\
						& \tt NFD+	& $16637$ 		& $0^\dagger$\\
						& \tt BF++	& $16637$			& $0^\dagger$\\
		\midrule
		\multirow{6}{*}{\tt bp4.dat}	& \tt MR+ 	& $29258$ & $0.0078125$\\
						& \tt FF++	& $25454$ 		& $0.015625$\\
						& \tt FFD++	& $25157^\dagger$	& $0.0078125$\\
						& \tt NF	  & $33397$ 		& $0.0078125$\\
						& \tt NFD+	& $32174$ 		& $0^\dagger$\\
						& \tt BF++	& $25303$			& $0^\dagger$\\
		\midrule
		\multirow{6}{*}{\tt bp5.dat}	& \tt MR+ 	& $32524$ 		& $0^\dagger$\\
						& \tt FF++	& $30155$ 		& $0.0078125$\\
						& \tt FFD++	& $30111^\dagger$	& $0^\dagger$\\
						& \tt NF	  & $36623$ 		& $0.0078125$\\
						& \tt NFD+	& $37048$ 		& $0^\dagger$\\
						& \tt BF++	& $30152$ 		& $0^\dagger$\\
		\midrule
		\multirow{6}{*}{\tt bp6.dat}	& \tt MR+ 	& $55566$ & $0.0234375$\\
						& \tt FF++	& $50021$ 		& $0.0078125$\\
						& \tt FFD++	& $49951^\dagger$	& $0.0078125$\\
						& \tt NF	  & $63078$ 		& $0.0078125$\\
						& \tt NFD+	& $59852$ 		& $0^\dagger$\\
						& \tt BF++	& $50021$			& $0.0078125$\\
		\midrule
		\multirow{6}{*}{\tt bp7.dat}	& \tt MR+ 	& $38242$ 		& $0.015625$\\
						& \tt FF++	& $36863^\dagger$	& 0.0078125\\ 
						& \tt FFD++	& $39276$ 		& $0.0071825$\\
						& \tt NF	  & $42082$ 		& $0^\dagger$\\
						& \tt NFD+	& $47937$ 		& $0^\dagger$\\
						& \tt BF++	& $36866$			& $0.0703125$\\
		\midrule
		\multirow{6}{*}{\tt bp8.dat}	& \tt MR+ 	& $82804$ & $0.0234375$\\
						& \tt FF++	& $79746$ 		& $0.015625$\\
						& \tt FFD++	& $79130^\dagger$	& $0.015625$\\
						& \tt NF	  & $93618$ 		& $0.0078125^\dagger$\\
						& \tt NFD+	& $104096$ 		& $0.0078125^\dagger$\\
						& \tt BF++	& $79731$			& $0.015625$\\
		\midrule
		\multirow{6}{*}{\tt bp9.dat}	& \tt MR+ 	& $293319$ 		& $0.078125^\dagger$\\
						& \tt FF++	& $252577$ 		& $0.234375$\\
						& \tt FFD++	& $251164^\dagger$	& $0.125$\\
						& \tt NF	  & $333852$ 		& $0.0078125^\dagger$\\
						& \tt NFD+	& $322643$ 		& $0.015625$\\
						& \tt BF++	& $251568$		& $0.0390625$\\
		\midrule
		\multirow{6}{*}{\tt bp10.dat}	& \tt MR+	& $588478$ 		& $0.171875$\\
						& \tt FF++	& $506844$ 		      & $0.296875$\\
						& \tt FFD++	& $504927^\dagger$	& $0.179688$\\
						& \tt NF	  & $669926$ 		      & $0.015625^\dagger$\\
						& \tt NFD+	& $645906$ 		      & $0.0390625$\\
						& \tt BF++	& $505643$		      & $0.046875$\\
		\midrule
		\multirow{6}{*}{\tt bp11.dat}	& \tt MR+ 	& $2929609$ & $0.882812$\\
						& \tt FF++	& $2510868$ 		      & $8.42188$\\
						& \tt FFD++	& $2502387^\dagger$	  & $4.86719$\\
						& \tt NF	  & $3333928$ 		& $0.09375^\dagger$\\
						& \tt NFD+	& $3225064$ 		& $0.234375$\\
						& \tt BF++	& $2504284$		  & $1.3125$\\
		\bottomrule
	\end{longtable}
\end{center}

\section{Speed-up for choosing the right bin} %

Algorithms \texttt{First-Fit-Decreasing} and
\texttt{Next-Fit-Decreasing} greatly benefit from changing the sorting
algorithm to \texttt{Counting Sort}, thus yielding a factor of
$\mathcal{O}(n)$ for sorting instead of the usual
$\mathcal{O}(n\log{}n)$ for Heapsort.

\emph{All algorithms} benefit from the following idea: Since the minimum
weight $w$ is known, a bin can be closed after adding an object if the
remaining capacity of the bin is less than the ``limit capacity'' $c_l =
K-w$ (where $K$ is the total capacity of the bin).

If not mentioned otherwise, optimized versions of all algorithms have
been implemented. Figures \ref{img:Comparison_5} and
\ref{img:Comparison_8} show the running times of all algorithms for
problems \texttt{bp5} and \texttt{bp8}, respectively. The algorithms
depicted in the figures are explained in Table~\ref{tab:Naming_Scheme}.

\begin{table}[b]
	\centering
	\caption{Naming schemes used in figures \ref{img:Comparison_5} and
	\ref{img:Comparison_8}}
	\begin{tabular}{lp{0.9\textwidth}}
		\toprule
		\textbf{MR}	& \texttt{Max-Rest}\\
		\textbf{MR+}	& \texttt{Max-Rest} using a \emph{priority queue}\\
		\midrule
		\textbf{FF}	& \texttt{First-Fit}\\
		\textbf{FF+}	& \texttt{First-Fit} using the \texttt{vector}
				  container from the STL. Thus, ``almost full'' bins can be
				  removed without having to re-order
				  the array.\\
		\textbf{FF++}	& \texttt{First-Fit} using the \texttt{map}
				  container from the STL. Thus, a lookup table is created that
				  determines the proper bin more rapidly.\\
		\textbf{FFD}	& \texttt{First-Fit-Decreasing} using the \texttt{Heapsort} algorithm.\\
		\textbf{FFD+}	& \texttt{First-Fit-Decreasing} using
				  \textbf{FF+} and \texttt{Counting Sort}.\\ 
		\textbf{FFD++}	& \texttt{First-Fit-Decreasing} using
				  \textbf{FF++} and \texttt{Counting Sort}.\\
		\midrule
		\textbf{NF}	& \texttt{Next-Fit}\\
		\textbf{NFD}	& \texttt{Next-Fit-Decreasing}\\
		\textbf{NFD+}	& \texttt{Next-Fit-Decreasing} using
				  \texttt{Counting Sort}.\\
		\midrule
		\textbf{BF}	& \texttt{Best-Fit}\\
		\bottomrule
	\end{tabular}
	\label{tab:Naming_Scheme}
\end{table}

\begin{figure}[bp]
  \centering
  \includegraphics[width=\linewidth]{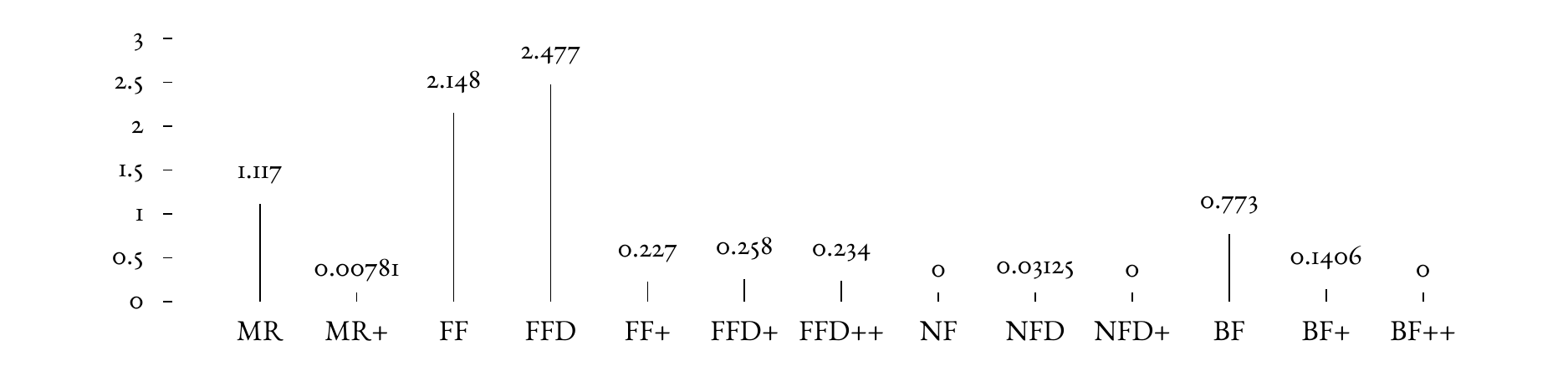}
	\caption{Running times in seconds for problem \texttt{bp5}. Note
	that a running time of $0$ simply means that only very few CPU
	cycles have been used (in this case, the \texttt{clock()}
	function will not be able to take accurate measurements).}
	\label{img:Comparison_5}
\end{figure}

\begin{sidewaysfigure}
  \centering
  \includegraphics[width=0.75\linewidth]{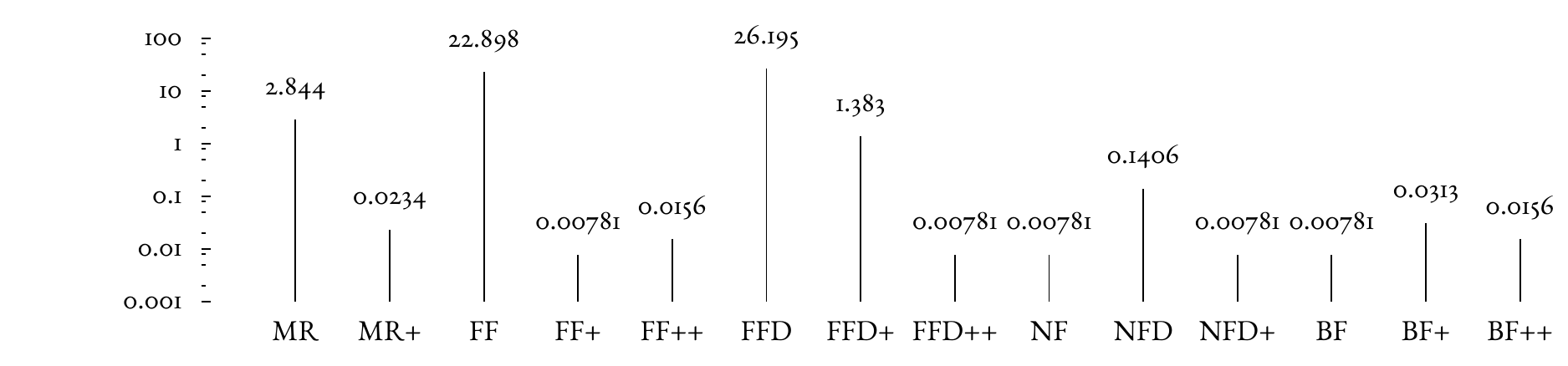}
  \caption{Running times for problem \texttt{bp8}. Note that a
	logarithmic scale has been used because the running times differ
	greatly.}
	\label{img:Comparison_8}
\end{sidewaysfigure}

\subsection{First-Fit \& First-Fit-Decreasing} %

The right bin can be determined very quickly if the algorithm uses a
\emph{map} that stores the index $j$ of the bin which contains an
object of weight $w$. Since the bins are filled in increasing order of
their indices, all bins with index $j\prime < j$ can be skipped when
adding an object of weight $w\prime \geq w$. Asymptotically, the
algorithm is still in $\mathcal{O}(n^2)$. 

A sample implementation has been supplied with the source code. See
function \texttt{first\_fit\_map} for more details. In pseudo-code, we
have:

\begin{algorithm}[H]
\caption{First-Fit-Lookup}
	\begin{algorithmic}[1]
		\FOR{All objects $i = 1,2,\dots,n$}
			\STATE Lookup the last bin $j$ that was used to
			pack an object of this size. If no such bin
			exists, set $j = 1$.
			\FOR{All remaining bins $j,j+1,\dots$} 
				\IF{Object $i$ fits in current bin}
					\STATE Pack object $i$ in
					current bin.
					\STATE Save the current index in
					the lookup table.
					\STATE Break the loop and pack
					the next object.
				\ENDIF
			\ENDFOR	
			\IF{Object $i$ did not fit in any available bin}
				\STATE Create new bin and pack object
				$i$.
			\ENDIF
		\ENDFOR
	\end{algorithmic}
\end{algorithm}

\subsection{Max-Rest} %

This algorithm greatly benefits from a \emph{priority queue} (see the running
time analysis): When the heuristic needs to decide where to place an
object, the bin with the highest priority (i.e. the \emph{maximum} remaining
capacity) would be chosen from the queue. If the object does not fit, a
new bin needs to be created. If it does fit, the bin's capacity is
updated and it is placed back in the priority queue.

A sample implementation has been supplied with the source code. See
function \texttt{max\_rest\_pq} for more details.

\subsection{Best-Fit} %

The \texttt{Best-Fit} algorithm greatly benefits from using a
\emph{lookup table} for the bin sizes. Index $i$ of this table stores
the number of bins with remaining capacity $i$. When a new object is
added, the lookup table is queried for increasing remaining capacities.
Thus, the first match will be the best one. The algorithm can be
formalized as follows: 

\begin{algorithm}[H]
\caption{Best-Fit-Lookup-Table}
	\begin{algorithmic}[1]
		\STATE Initialize lookup table $t$ with $t_K = n$ (there
		are $n$ bins with remaining capacity $K$, hence no
		object has been packed yet). 
		\FOR{All objects $i = 1,2,\dots,n$}
			\STATE Let $w_i$ be the current object size.
			\STATE Search a suitable bin by querying the
			lookup table for bins of remaining capacity
			$w_i, w_i+1, \dots$ until a bin has been found
			at index $t_l$.
			\STATE $t_l--$, because there is one bin less
			with remaining capacity $l$.
			\STATE $t_{l-w_i}++$, because the number of bins
			with remaining capacity $l-w_i$ has been
			increased by one. 
		\ENDFOR
		\STATE Compute $\sum_{i=0}^{i=K-1} t_i$ in order to
		determine the number of bins that has to be used.
	\end{algorithmic}
\end{algorithm}

Since the remaining capacity of a bin is at most $K$, a suitable bin can
always be determined in $\mathcal{O}(K)$. Because of the outer loop, the
algorithm has a running time of $\mathcal{O}(nK)$.

The obvious disadvantage of this algorithm is that the object positions
are not stored in the lookup table. This can be solved by using
a table of queues, for example, which contain pointers to the ``real''
bins. In the last two steps of the outer loop, the elements would be
removed or added to the queues, respectively. Since this can be done in
constant time, the running time of the algorithm is \emph{not} changed.

\subsection{Next-Fit and Next-Fit-Decreasing} %

Since only one bin is managed at a time, these algorithms cannot be
optimized any further (apart from using \texttt{Counting Sort} for
\texttt{Next-Fit-Decreasing}).

\section{Worst-case results} %

\begin{claim}
	$m_{\mathrm{FF}} < \frac{17}{10}\cdot{}m_{\mathrm{OPT}}+2$
\end{claim}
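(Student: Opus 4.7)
The plan is to use the classical weighting-function technique originally due to Johnson. Normalising the bin capacity to $B=1$, I would introduce a function $w \colon (0,1] \to \mathbb{R}_{\geq 0}$ and establish two properties: (W1) in every feasible bin, and hence in every bin of an optimal packing, the total weight is at most $17/10$; (W2) in the First-Fit packing, every bin carries total weight at least $1$, except for a bounded number of exceptional bins that will be absorbed into the additive constant. Summing $\sum_i w(s(i))$ in two different ways then yields $m_{\mathrm{FF}} - 2 \leq \sum_i w(s(i)) \leq \frac{17}{10}\, m_{\mathrm{OPT}}$, which is the desired bound.

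For the weight function itself I would partition the items into the four canonical size classes $A = (1/2,1]$, $B = (1/3,1/2]$, $C = (1/6,1/3]$, $D = (0,1/6]$ and define $w$ piecewise linearly on each class, with slope $6/5$ and class-specific additive offsets. The offsets are calibrated so that the ``difficulty'' of packing an item, meaning the residual capacity it tends to force to be wasted, is reflected directly in its weight. To verify (W1) I would then carry out a finite case analysis on which combinations of the four classes can coexist in a bin of capacity $1$: at most one $A$-item, at most two $B$-items, at most five $C$-items, with the residual volume filled by $D$-items. For each such combination the maximum total weight reduces to a small linear program that can be solved by inspection, and one checks directly that the maximum never exceeds $17/10$. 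This step is tedious but purely mechanical.

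The main obstacle is (W2). Here I would exploit the defining property of First-Fit, namely that a new bin is opened only because the current item fits into no earlier bin, so every earlier bin has residual capacity strictly less than the current item's size. Using this, I would argue that whenever some First-Fit bin carries weight below $1$, the missing weight has already been collected as a bonus offset by items in earlier bins, with only $O(1)$ boundary bins at the start or end of the packing left unpaired. Collecting these boundary terms into the additive constant $2$, and noting that the extremal weight-$17/10$ configuration in (W1) cannot coexist with a fully tight accounting in (W2) (which is what upgrades the bound from $\leq$ to $<$), then completes the argument. The pairing and bookkeeping required at this last step is precisely what makes the original Johnson proof notoriously long, and it is the main difficulty I would expect to fight through in full detail.
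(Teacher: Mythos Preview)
The paper does not actually prove this claim; its entire proof reads ``Already shown in lecture.'' Your proposal, by contrast, sketches the classical Johnson weighting-function argument, which is the standard route to the $17/10$ bound and almost certainly what the referenced lecture covered. As a high-level plan your outline is sound: the size classes, the piecewise-linear weight with slope $6/5$, the upper bound (W1) via case analysis on feasible bin contents, and the lower bound (W2) via the First-Fit invariant are exactly the ingredients of the original proof. You are also right that the real work hides in (W2), where one must carefully account for the few ``light'' bins, and that this bookkeeping is what makes the full argument long. So there is nothing to correct here; you have simply supplied what the paper chose to omit.
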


\begin{proof}
	Already shown in lecture.
\end{proof}

\begin{claim}
	$m_{\mathrm{FFD}} < \frac{11}{9}\cdot{}m_{\mathrm{OPT}}+4$
\end{claim}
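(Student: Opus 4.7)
The plan is to follow the classical Johnson-style argument via a weighting function, since a direct combinatorial counting becomes unwieldy. First I would classify items by size relative to the bin capacity $B$ into four groups: ``large'' items of size greater than $B/2$, ``medium'' items of size in $(B/3, B/2]$, ``small'' items of size in $(B/6, B/3]$, and ``tiny'' items of size at most $B/6$. This classification is motivated by the observation that at most one large item fits in any bin, at most two medium items fit, and so on, which controls how FFD and OPT bins can look.

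Next I would exploit the monotone structure of FFD: because items are processed in decreasing order, once FFD opens a bin containing only tiny items, every subsequently opened bin must also contain only tiny items, and in fact every previously opened bin must be filled to within one tiny item's worth of capacity. This lets me separate two regimes. If the last bin contains a non-tiny item, I would apply the weighting argument on all bins. If the last bins are tiny-only, a short direct calculation (comparing total item weight to $B \cdot m_{\mathrm{OPT}}$) shows that the tiny tail cannot push the ratio past $\tfrac{11}{9}$ without adding more than the constant slack of $+4$.

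The core step is to define a weight function $w \colon \text{items} \to \mathbb{R}_{\geq 0}$ satisfying two properties: (i) for each FFD bin $B_i$, $\sum_{x \in B_i} w(x) \geq 1$, and (ii) for each OPT bin $B^*$, $\sum_{x \in B^*} w(x) \leq \tfrac{11}{9}$. The standard choice assigns $w(x) = 6/5$ to large items, $w(x) = 9/10$ to medium items, linear interpolations to small items, and $w(x) = 6s(x)/(5B)$ to tiny items; the constant $\tfrac{11}{9} = \tfrac{6}{5} \cdot \ldots$ arises as the maximum total weight achievable over any legal OPT configuration. Summing (i) over all FFD bins and (ii) over all OPT bins then yields
\begin{equation*}
  m_{\mathrm{FFD}} \;\leq\; \sum_{x} w(x) \;\leq\; \tfrac{11}{9}\, m_{\mathrm{OPT}},
\end{equation*}
and the additive $+4$ absorbs the at-most-constant number of FFD bins for which property (i) fails (the very first or last few bins, depending on the case).

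The main obstacle is verifying property (ii) rigorously: one has to enumerate the combinatorially many ways an OPT bin can be composed from the four size classes, and in each case confirm that the designed $w$ stays below $\tfrac{11}{9}$. This is essentially the technical heart of Johnson's original argument, and any honest write-up either works through these cases explicitly or appeals to a published verification; I would lean on the latter and merely sketch the extremal configuration (one large plus one medium plus tiny fill) that realises the bound.
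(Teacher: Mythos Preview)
The paper supplies no argument at all for this claim: its entire proof reads ``Already shown in lecture.'' There is therefore nothing substantive to compare your proposal against; what you have written is a reasonable outline of the classical Johnson weighting-function proof, which is almost certainly what the referenced lecture contained.

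One small caution on your sketch: the concrete weights you quote do not satisfy property~(ii). With $w=\tfrac{6}{5}$ for a large item and $w=\tfrac{9}{10}$ for a medium item, an OPT bin holding one of each (which is feasible whenever their sizes sum to at most $B$) would carry total weight $\tfrac{6}{5}+\tfrac{9}{10}=\tfrac{21}{10}$, far above $\tfrac{11}{9}$. Johnson's actual weighting is more delicate and piecewise-linear in the size within each class; since you explicitly plan to cite the published case analysis rather than reproduce it, this is a cosmetic slip rather than a structural gap, but you should either drop the specific numbers or replace them with the correct ones before writing the argument up.
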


\begin{proof}
	Already shown in lecture.
\end{proof}

\begin{claim}
	$m_{\mathrm{NF}} \leq 2\cdot{}m_{\mathrm{OPT}}$
\end{claim}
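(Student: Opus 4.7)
The plan is to exploit the key structural property of \texttt{Next-Fit}: once a new bin is opened, the previous bin is never revisited. This means that whenever \texttt{Next-Fit} decides to open bin $j+1$, the object triggering this decision did not fit into bin $j$, so the combined content of bins $j$ and $j+1$ must strictly exceed the bin capacity $B$.

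First I would make this observation precise: for any two consecutive bins $j$ and $j+1$ produced by \texttt{Next-Fit}, denoting by $s(j)$ the total size packed into bin $j$, we have $s(j) + s(j+1) > B$. Next I would pair up the bins used by \texttt{Next-Fit} into consecutive pairs $(1,2), (3,4), \dots$ and sum the inequalities across all pairs. This yields a lower bound on the total weight $W := \sum_{i} s(i_j)$ of the items, roughly $W > \lfloor m_{\mathrm{NF}}/2 \rfloor \cdot B$.

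On the other hand, since every bin (in particular in an optimal packing) has capacity at most $B$, any valid packing must satisfy $m_{\mathrm{OPT}} \cdot B \geq W$. Combining these two bounds gives $m_{\mathrm{OPT}} > \lfloor m_{\mathrm{NF}}/2 \rfloor$, from which $m_{\mathrm{NF}} \leq 2 \cdot m_{\mathrm{OPT}}$ follows after a short case distinction on the parity of $m_{\mathrm{NF}}$.

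The only mild obstacle is the edge case where $m_{\mathrm{NF}}$ is odd, in which case the final bin is unpaired and contributes nothing to the pairwise lower bound; I would handle this by noting that one may still pair it with a fictitious empty bin or simply absorb it into the integer bound, since dropping a single bin from the sum preserves the inequality $m_{\mathrm{OPT}} \geq \lceil m_{\mathrm{NF}}/2 \rceil$. Everything else is routine arithmetic.
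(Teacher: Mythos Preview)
Your argument is correct and follows essentially the same route as the paper: both hinge on the observation that any two consecutive \texttt{Next-Fit} bins together hold more than one full bin's worth of items, and then compare against the trivial lower bound $m_{\mathrm{OPT}} \geq W/B$. The paper's version is terser (it states the consecutive-bin inequality and concludes in one line) and additionally supplies a tightness example, which you do not address; otherwise your pairing-and-parity write-up is just a more explicit rendering of the same idea.
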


\begin{proof}
	For two subsequent bins, $c_i+c_{i+1} > 1$ holds. If it were
	otherwise, no new bin would have been opened. Hence, the
	solution of the heuristic is at most twice as big as the optimum
	solution. The bound is \emph{tight} for a problem that contains
	$n$ objects with weights $\frac{1}{2}$ and $n$ objects with
	weights $\frac{1}{2n}$. If the objects arrive in an alternating
	fashion, i.e. $\frac{1}{2}, \frac{1}{2n}, \frac{1}{2}, \dots$,
	\texttt{Next-Fit} will create $2n$ bins, whereas the optimal
	solution consists of $n+1$ bins. 
\end{proof}

\bibliographystyle{amsalpha}
\bibliography{main}

\vfill

\end{document}